\newtheorem{theorem}{Theorem}[section]
\newtheorem{prop}[theorem]{Proposition}
\theoremstyle{definition}
\newtheorem{definition}[theorem]{Definition}
\newtheorem{example}[theorem]{Example}
\numberwithin{equation}{subsection}
\theoremstyle{plain}
\newtheorem{question}{Question}
\newtheorem{problem}{Problem}
\newtheorem{corollary}[theorem]{Corollary}
\numberwithin{equation}{section}
\begin{document}
\title{Compatible actions and  non-abelian tensor products}
\author[V. G. Bardakov]{Valeriy G. Bardakov}
\author[M. V. Neshchadim]{Mikhail V. Neshchadim}

\date{\today}
\address{Sobolev Institute of Mathematics, Novosibirsk 630090, Russia,}
\address{Novosibirsk State University, Novosibirsk 630090, Russia,}
\address{Novosibirsk State Agrarian University, Dobrolyubova street, 160,  Novosibirsk, 630039, Russia,}
\email{bardakov@math.nsc.ru}

\address{Sobolev Institute of Mathematics and Novosibirsk State University, Novosibirsk 630090, Russia,}
\email{neshch@math.nsc.ru}


\subjclass[2010]{Primary 20E22; Secondary 20F18, 20F28}
\keywords{tensor product; compatible action, nilpotent group}

\begin{abstract}
For a pair of groups $G, H$ we study pairs of  actions $G$ on $H$ and $H$ on $G$  such that these pairs  are compatible and  non-abelian tensor products $G \otimes H$ are defined.
\end{abstract}
\maketitle

\section{Introduction}

R. Brown and J.-L. Loday \cite{BL, BL1}
introduced the non-abelian tensor product $G \otimes H$ for a pair of groups $G$ and $H$
 following  works of C.~Miller \cite{Mil},  and A.~S.-T.~Lue \cite{Lue}.
The investigation of the non-abelian tensor product from a group theoretical
point of view started with a paper by R.~Brown, D.~L.~Johnson, and E.~F.~Robertson \cite{BJR}.

The non-abelian tensor product $G \otimes H$ depends not only on the groups $G$ and $H$ but also on the action of $G$ on $H$ and on the action of $H$ on $G$. Moreover these actions must be compatible  (see the definition in  Section 2). In the present paper we study the following question: what  actions are compatible?

The paper is organized as follows. In Section 2, we recall a definition of non-abelian tensor product, formulate some its properties and give an answer on a question of V.~Thomas, proving that there are nilpotent group $G$ and some group $H$ such that in $G \otimes H$ the derivative subgroup $[G, H]$ is equal to $G$. In the Section 3 we study the following question: Let a group $H$ acts on a group $G$ by automorphisms, is it possible to define an action of $G$ on $H$ such that this pair  of actions are compatible?
Some necessary conditions for compatibility of actions will be given and in some cases will be prove a formula for the second action if the first one is given.
In the Section 4 we
construct pairs compatible actions for arbitrary groups and for 2-step nilpotent groups give a particular answer on the question from Section 3.
In Section 5 we study groups of the form $G \otimes \mathbb{Z}_2$ and describe compatible actions.

\section{Preliminaries}

In this article we will use the following notations.  For elements $x$, $y$ in a group
$G$, the conjugation of $x$ by $y$ is $x^y = y^{-1} x y$; and the commutator of $x$ and $y$ is
$[x, y] = x^{-1} x^y = x^{-1} y^{-1} x y$. We write $G'$ for the derived subgroup of $G$, i.e. $G' = [G, G]$; $G^{ab}$ for the
abelianized group $G / G'$; the second hypercenter $\zeta_2 G$ of $G$ is the  subgroup of $G$ such that
$$
\zeta_2 G  / \zeta_1 G = \zeta_1 (G / \zeta_1 G),
$$
where $\zeta_1 G = Z(G)$ is the center of a group $G$.

Recall the definition of the non-abelian tensor product $G \otimes H$ of groups $G$ and $H$ (see \cite{BL, BL1}).
It is defined for a pair of groups $G$ and $H$ where each one acts on the other (on
right)
$$
G \times H \longrightarrow G, ~~(g, h) \mapsto g^h; ~~~H \times G \longrightarrow H, ~~(h, g) \mapsto h^g
$$
and on itself by conjugation, in such a way that for all $g, g_1 \in G$ and $h, h_1 \in H,$
$$
g^{(h^{g_1})} = \left( \left( g^{g_1^{-1}} \right)^h \right)^{g_1}~~
\mbox{and}~~
h^{(g^{h_1})} = \left( \left( h^{h_1^{-1}} \right)^g \right)^{h_1}.
$$
In this situation we say that $G$ and $H$ act {\it compatibly} on each other. The {\it non-abelian
tensor product} $G \otimes H$ is the group generated by all symbols $g \otimes h$, $g \in G$, $h \in H$,
subject to the relations
$$
g g_1 \otimes h = (g^{g_1} \otimes h^{g_1}) (g_1 \otimes h)~ \mbox{and}~~ g \otimes h h_1 = (g \otimes h_1) (g^{h_1} \otimes h^{h_1})
$$
for all $g, g_1 \in G$, $h, h_1 \in H$.

In particular, as the conjugation action of a group $G$ on itself is compatible, then
the tensor square $G \otimes G$ of a group $G$ may always be defined. Also, the tensor product $G \otimes H$ is defined if $G$ and $H$ are two normal subgroups of some group $M$ and actions are conjugations in $M$.

\medskip

The following proposition is well known. We give a proof only for fullness.

\begin{prop} \label{P2.2}
1) Let  $G$ and $H$ be abelian groups. Independently on the action of $G$ on $H$ and $H$ on $G$, the group $G \otimes H$ is abelian.

2) (See \cite[Proposition 2.4]{BL1}) Let  $G$ and $H$ be arbitrary groups. If the actions of $G$ on $H$ and $H$ on $G$ are trivial, then the group $G \otimes H \cong G^{ab} \otimes_{\mathbb{Z}} H^{ab}$ is the abelian tensor product.
\end{prop}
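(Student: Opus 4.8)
The only tools available are the two defining relations, which I abbreviate as
\[
\text{(R1)}\quad gg_1\otimes h=(g^{g_1}\otimes h^{g_1})(g_1\otimes h),\qquad
\text{(R2)}\quad g\otimes hh_1=(g\otimes h_1)(g^{h_1}\otimes h^{h_1}).
\]
The plan is to first record the degenerate identities $g\otimes 1=1\otimes h=1$ (put $h_1=1$ in (R2), resp.\ $g_1=1$ in (R1)) and the inverse formula $(g\otimes h)^{-1}=g^{-1}\otimes h$ valid once the action of $H$ on $G$ is trivial (put $g_1=g^{-1}$ in (R1), using $g^{g^{-1}}=g$). The common engine for both parts is to expand one element in two different orders by (R1) and (R2) and then cancel the shared outer factors: the hypotheses collapse the self-conjugation terms $g^{g_1},h^{h_1}$, and compatibility collapses the surviving mixed terms.

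\textbf{Part 1.} Here $G,H$ are abelian, so $g^{g_1}=g$ and $h^{h_1}=h$, while the cross actions persist. I would expand $gg_1\otimes hh_1$ once applying (R1) then (R2), and once applying (R2) then (R1); equating and cancelling the common leftmost and rightmost factors yields a conjugation identity of the form
\[
(g_1\otimes h_1)^{-1}\bigl(g^{h_1^{g_1}}\otimes h^{g_1}\bigr)(g_1\otimes h_1)=g^{h_1}\otimes h^{g_1^{h_1}}.
\]
Now compatibility, specialized to the abelian case, reads $g^{(h^{g_1})}=g^h$ and $h^{(g^{h_1})}=h^g$; applying these gives $g^{h_1^{g_1}}=g^{h_1}$ and $h^{g_1^{h_1}}=h^{g_1}$, so the identity collapses to the statement that $g^{h_1}\otimes h^{g_1}$ commutes with $g_1\otimes h_1$ for all $g,h$. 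Since the actions are by automorphisms, $g\mapsto g^{h_1}$ and $h\mapsto h^{g_1}$ are bijections, so $g^{h_1}\otimes h^{g_1}$ ranges over \emph{every} generator; hence each generator commutes with $g_1\otimes h_1$, and $G\otimes H$ is abelian.

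\textbf{Part 2.} Now the cross actions are trivial but $G,H$ are arbitrary, so (R1) and (R2) read $gg_1\otimes h=(g^{g_1}\otimes h)(g_1\otimes h)$ and $g\otimes hh_1=(g\otimes h_1)(g\otimes h^{h_1})$. The same two-way expansion of $gg_1\otimes hh_1$ now shows that $g^{g_1}\otimes h^{h_1}$ commutes with $g_1\otimes h_1$, and bijectivity of conjugation forces $G\otimes H$ to be abelian, exactly as above. The crucial further step is the lemma that conjugation inside each factor is invisible to the tensor: $g^{g_1}\otimes h=g\otimes h$ and $g\otimes h^{h_1}=g\otimes h$. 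To prove the first I would expand $g\,(g_1g^{-1})\otimes h$ by (R1), compare with $(g_1g^{-1})\otimes h$ expanded by (R1), and use abelianness to cancel the repeated factor $g_1^{g^{-1}}\otimes h$; this leaves $g^{g_1g^{-1}}\otimes h=(g^{-1}\otimes h)^{-1}=g\otimes h$ by the inverse formula, and since $g_1g^{-1}$ is an arbitrary element of $G$ the claim follows. The second identity is symmetric.

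Granting this lemma, the relations become genuinely bilinear, $gg_1\otimes h=(g\otimes h)(g_1\otimes h)$ and $g\otimes hh_1=(g\otimes h)(g\otimes h_1)$, and the lemma says the bilinear map $G\times H\to G\otimes H$ factors through $G^{ab}\times H^{ab}$; this yields a homomorphism $\Psi\colon G^{ab}\otimes_{\mathbb{Z}}H^{ab}\to G\otimes H$. In the other direction $g\otimes h\mapsto \bar g\otimes\bar h$ respects (R1) and (R2) because conjugate elements have equal images in $G^{ab}$ and $H^{ab}$, giving $\Phi\colon G\otimes H\to G^{ab}\otimes_{\mathbb{Z}}H^{ab}$; as $\Phi$ and $\Psi$ are mutually inverse on generators, the isomorphism follows. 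I expect the invisibility-of-conjugation lemma to be the main obstacle: unlike the two commutativity statements it does not drop out of a single cancellation, but requires both the inverse formula and abelianness already in hand, so the ordering of the steps (abelianness first, then the lemma, then bilinearity) is essential.
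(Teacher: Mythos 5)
Your proof is correct, and it takes a genuinely different, more self-contained route than the paper's. The paper quotes the standard conjugation formula $(g\otimes h)^{g_1\otimes h_1}=g^{[g_1,h_1]}\otimes h^{[g_1,h_1]}$ and observes that the exponent acts trivially (part 1: $G,H$ abelian; part 2: expanding $g^{[g_1,h_1]}$ letter by letter using triviality of the cross actions); moreover, for part 2 the paper actually proves only that $G\otimes H$ is abelian, deferring the isomorphism $G\otimes H\cong G^{ab}\otimes_{\mathbb{Z}}H^{ab}$ to the cited Proposition 2.4 of Brown--Loday. You instead work entirely from the defining relations: your double expansion of $gg_1\otimes hh_1$ does yield exactly the identity $(g_1\otimes h_1)^{-1}(g^{h_1^{g_1}}\otimes h^{g_1})(g_1\otimes h_1)=g^{h_1}\otimes h^{g_1^{h_1}}$ (I verified the cancellations, which need no commutativity), and in part 2 you go beyond the paper by constructing the mutually inverse homomorphisms $\Phi,\Psi$ via the conjugation-invisibility lemma. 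One point where your version is actually tighter than the paper's: your explicit appeal to compatibility in part 1 ($g^{h_1^{g_1}}=g^{h_1}$, $h^{g_1^{h_1}}=h^{g_1}$) is not optional, because in the paper's formula the element $[g_1,h_1]$ lies in $G$ and acts on the $H$-component through the cross action, not by conjugation in $H$, so abelianness of $H$ alone does not fix $h$; the paper's ``analogously'' silently invokes compatibility exactly where you use it openly. Two small blemishes, neither affecting correctness: your inverse formula $(g\otimes h)^{-1}=g^{-1}\otimes h$ requires the action of $G$ on $H$ (not of $H$ on $G$) to be trivial, since the term to kill is $h^{g^{-1}}$ --- immaterial where you use it, as both actions are trivial in part 2; and the factorization through $G^{ab}\times H^{ab}$ literally needs $c\otimes h=1$ for $c\in G'$, which does follow at once from your lemma plus bilinearity evaluated on commutators, $[g_1,g_2]\otimes h=(g_1\otimes h)^{-1}(g_1^{g_2}\otimes h)=1$, but deserves to be stated.
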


\begin{proof}
1) We have the equality
$$
(g \otimes h)^{g_1 \otimes h_1} = g^{[g_1, h_1]} \otimes h^{[g_1, h_1]},
$$
where $g^{[g_1, h_1]}$ is the action of the commutator $[g_1, h_1] \in G$ by conjugation on $g$, but $G$ is abelian and $g^{[g_1, h_1]} = g$. Analogously,
$h^{[g_1, h_1]} = h$. Hence, $G \otimes H$ is abelian.

2) From the previous formula and triviality  actions we have
$$
g^{[g_1, h_1]} = g^{g_1^{-1} h_1^{-1} g_1 h_1} = \left( g^{g_1^{-1}} \right)^{h_1^{-1} g_1 h_1} = \left( g^{g_1^{-1}} \right)^{ g_1 h_1} =
\left( g^{g_1^{-1}} \right)^{ g_1 h_1} =  g^{h_1} = g.
$$
Analogously,
$h^{[g_1, h_1]} = h$. Hence, $G \otimes H$ is abelian.
\end{proof}

\medskip

Remind presentation of non-abelian tensor product as a central extension (see \cite{DLT}).  The {\it derivative subgroup} of $G$ by $H$ is called the following subgroup
$$
D_H(G) = [G, H] = \langle g^{-1} g^h ~|~g\in G, h \in H \rangle.
$$
The map $\kappa : G \otimes H \longrightarrow D_H(G)$ defined by $\kappa (g \otimes h) = g^{-1} g^h$ is a homomorphism, its kernel $A = \ker(\kappa)$ is the central subgroup of
$G \otimes H$  and $G$ acts on $G \otimes H$ by the rule $(g \otimes h)^x = g^x \otimes h^x$, $x \in G$, i.e. there exists the short exact sequence
$$
1 \longrightarrow A \longrightarrow G \otimes H  \longrightarrow D_H (G) \longrightarrow 1.
$$
In this case $A$ can be viewed as $\mathbb{Z} [D_H (G)]$-module via conjugation in $G \otimes H$, i.e. under the action induced by setting
$$
a \cdot g = x^{-1} a x,~~a \in A, x \in G \otimes H, \kappa(x) = g.
$$

\medskip

The following proposition gives an answer on the following question: is there non-abelian tensor product $G \otimes H$ such that $[G, H] = G$? which of V.~Thomas formulated in some letter to the authors.

\begin{prop}
Let   $G = F_n / \gamma_k F_n$, $k \geq 2$, be a free nilpotent group of rank  $n \geq 2$ and  $H = \mathrm{Aut} (G)$ is its automorphism group. Then   $D_H(G) = [G, H] = G$.
\end{prop}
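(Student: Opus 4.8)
The plan is to exhibit, for each generator of $G$, an explicit automorphism that realizes that generator as an element of the form $g^{-1} g^{h}$, so that $D_H(G)$ already contains a generating set of $G$; no lifting argument through the lower central series is then needed. Throughout I use the natural action $g^{h} = h(g)$ of $H = \mathrm{Aut}(G)$ on $G$, and I note that the defining set $\{\, g^{-1} g^{h} \mid g \in G,\, h \in H \,\}$ of $D_H(G)$ is unchanged if each $h$ is replaced by $h^{-1}$, so the left/right convention for the action is immaterial here.

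Write $G = \langle x_1, \dots, x_n \rangle$, where $x_1, \dots, x_n$ denote the images in $G$ of the free generators of $F_n$. Since $\gamma_k F_n$ is a fully invariant subgroup of $F_n$, every automorphism of $F_n$ descends to an automorphism of the quotient $G$. In particular, for each ordered pair $i \neq j$ the Nielsen transvection
$$
\phi_{ij}\colon\quad x_i \mapsto x_i x_j, \qquad x_\ell \mapsto x_\ell \ \ (\ell \neq i),
$$
which is an automorphism of $F_n$, induces an element $\phi_{ij} \in \mathrm{Aut}(G) = H$. Computing the corresponding generator of the derivative subgroup gives
$$
x_i^{-1}\, x_i^{\phi_{ij}} = x_i^{-1}(x_i x_j) = x_j \in D_H(G).
$$

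Hence $x_j \in D_H(G)$ for every $j$ admitting an index $i \neq j$; because $n \geq 2$, such an $i$ exists for each $j$, so all of $x_1, \dots, x_n$ lie in $D_H(G)$. As these elements generate $G$ and $D_H(G) \leq G$ by definition, this yields $D_H(G) = [G,H] = G$.

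The only point requiring care — and the place where the hypothesis $n \geq 2$ is genuinely used — is this last step: a single transvection $\phi_{ij}$ produces only the generator $x_j$ with $j \neq i$, so at least two generators are needed to recover each of them. For $n = 1$ the statement in fact fails: with $G = \mathbb{Z}$ and $H = \{\pm 1\}$ one obtains $D_H(G) = 2\mathbb{Z}$, which confirms that this is the essential obstruction rather than a mere technicality. A more robust but longer alternative, for a reader who prefers not to name explicit automorphisms, is to show that $D_H(G)$ surjects onto $G^{ab} = \mathbb{Z}^n$ (using that $\mathrm{Aut}(G) \to GL_n(\mathbb{Z})$ is onto, together with the fact that the elementary matrices $I + E_{ij}$ send $e_j$ to $e_i$) and then to invoke the standard fact that a subgroup $N$ of a finitely generated nilpotent group satisfying $N G' = G$ must equal $G$, i.e.\ $G' \leq \Phi(G)$. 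The direct computation above makes this detour unnecessary.
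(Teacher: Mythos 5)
Your proof is correct and is essentially the paper's own argument: both exhibit the Nielsen transvection $x_i \mapsto x_i x_j$ (the paper uses $x_2 \mapsto x_2 x_1$ and its analogues) as an automorphism of $G$ and compute $x_i^{-1} x_i^{h} = x_j$ to place every generator in $D_H(G)$. The only difference is cosmetic: the paper additionally records that the pair of actions (trivial action of $G$ on $H$, natural action of $H$ on $G$) is compatible, since the surrounding context is the existence of a tensor product $G \otimes H$ with $[G,H]=G$, but this is not needed for the stated equality itself.
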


\begin{proof}

Let  $F_n$ be a free group of rank $n\geq 2$ with the basis $x_1, \ldots, x_n$,
$G = F_n / \gamma_k F_n$ be a free $k-1$-step nilpotent group for $k\geq 2$.
 Let $G$ acts trivially on  $H$ and
elements of $H$ act by automorphisms on $G$. It is easy to see that these actions are compatible.

Let us show that in this case $[G, H]=G$. To do it, let us prove that $x_1$ lies in $[G, H]$.
Take  $\varphi_1 \in H=\mathrm{Aut} (G)$, which  acts on the generators of $G$
by the rules:
$$
x_1^{\varphi_1} = x_1, ~~x_2^{\varphi_1} = x_2 x_1, ~~x_3^{\varphi_1} = x_3, ~~\ldots,~~ x_n^{\varphi_1} = x_n.
$$
Then
$$
x_1^{-1} x_1^{\varphi_1} = 1, ~~x_2^{-1} x_2^{\varphi_1} = x_1,~~x_3^{-1} x_3^{\varphi_1} = 1, ~~ \ldots,~~ x_n^{-1}x_n^{\varphi_1} = 1.
$$
Hence the generator  $x_1$ lies in $[G, H]$.
Analogously, $x_2, x_3, \ldots, x_n$ lie  in $[G,H]$. This completes the proof.
\end{proof}

\section{What actions are compatible?}

In this section we study

\begin{question} \label{q1}
Let a group $H$ acts on a group $G$ by automorphisms. Is it possible to define an action of $G$ on $H$ such that this pair  of actions are compatible?
\end{question}

Consider some examples.

\begin{example}
Let us take $G = \{ 1, a, a^2 \} \cong \mathbb{Z}_3$, $H = \{ 1, b, b^2 \} \cong \mathbb{Z}_3$. In dependence on actions we have three cases.

1) If the action of $H$ on $G$ and the action of $G$ on $H$ are trivial, then by the second part of Proposition \ref{P2.2} $G \otimes H = \mathbb{Z}_3 \otimes_{\mathbb{Z}} \mathbb{Z}_3 \cong \mathbb{Z}_3$ is abelian tensor product.

2) Let $H$ acts non-trivially on $G$, i.e. $a^b = a^2$ and the action $G$ on $H$ is trivial. It is not difficult to check that $G$ and $H$ act compatibly on each other. To find $D_H(G) = [G, H]$  we calculate
$$
[a, b] = a^{-1} a^b = a^2 a^2 = a.
$$
Hence, $D_H(G) = G$. But $D_G(H) = 1$.

By the definition, $G \otimes H$ is generated by elements
$$
a \otimes b, a^2 \otimes b, a \otimes b^2, a^2 \otimes b^2.
$$
Using the defining relations:
$$
g g_1 \otimes h = (g^{g_1} \otimes h^{g_1}) (g_1 \otimes h),~~~g \otimes h h_1 = (g \otimes h_1) (g^{h_1} \otimes h^{h_1}),
$$
we find
$$
a^2 \otimes b = (a^{a} \otimes b^{a}) (a \otimes b) = (a \otimes b)^2,~~~
a \otimes b^2 = (a \otimes b) (a^b \otimes b^b) = (a \otimes b) (a^2 \otimes b) = (a \otimes b)^3.
$$
On the other side
$$
1 = a^2 a \otimes b = (a^{2} \otimes b^{a}) (a \otimes b) = (a \otimes b)^3.
$$
Hence,
$$
a \otimes b^2 =  a^2 \otimes b^2 = 1
$$
and in this case we have the same result: $\mathbb{Z}_3 \otimes \mathbb{Z}_3 = \mathbb{Z}_3$.

3) Let $H$ acts non-trivially on $G$, i.e. $a^b = a^2$ and  $G$ acts non-trivially on $H$. In this case $G$ and $H$ act non-compatibly on each other. Indeed,
$$
a^{(b^a)} = a^{b^2} = (a^2)^b = a,
$$
but
$$
\left( \left( a^{a^{-1}} \right) ^b \right)^a = (a^{b})^a = (a^2)^2 = a^2.
$$
Hence, the  equality
$$
a^{(b^a)} = \left( \left( a^{a^{-1}} \right) ^b \right)^a
$$
does not hold.
\end{example}

Let $G$, $H$ be some groups. Actions of
$G$ on $H$ and  $H$ on  $G$ are defined by homomorphisms
$$
\beta : G \rightarrow \mathrm{Aut} (H),\quad
 \alpha : H \rightarrow \mathrm{Aut} (G),
$$
and by definition
$$
g^h=g^{\alpha(h)}, \quad h^g=h^{\beta(g)},\quad g\in G, h\in H.
$$
The actions $(\alpha,\beta)$ are compatible, if
$$
g^{\alpha\left(h^{\beta(g_1)}\right)}= \left( \left( g^{g_1^{-1}} \right)^{\alpha(h)} \right)^{g_1}
$$
and
$$
h^{\beta\left(g^{\alpha(h_1)}\right)}= \left( \left( h^{h_1^{-1}} \right)^{\beta(g)} \right)^{h_1}
$$
for all $g, g_1 \in G$, $h, h_1 \in H$. In this case we will say that the pair $(\alpha,\beta)$ {\it is compatible}.

Rewrite these equalities in the form
$$
\alpha\left(h^{\beta(g_1)}\right)= \widehat{g_1}^{-1}\alpha(h)\widehat{g_1}
\eqno{(1)}
$$
and
$$
\beta\left(g^{\alpha(h_1)}\right)= \widehat{h_1}^{-1}\beta(g)\widehat{h_1},
\eqno{(2)}
$$
where $\widehat{g}$ is the inner automorphism of  $G$ which is induced by conjugation of $g$, i.e.
$$
\widehat{g} : g_1 \mapsto g^{-1}g_1g, ~~g, g_1 \in G,
$$
 and analogously,
$\widehat{h}$ is the inner automorphism of $H$  which is induced by the conjugation of  $h$, i.e.
$$
\widehat{h} : h_1 \mapsto h^{-1}h_1h, ~~h, h_1 \in H.
$$

\medskip

\begin{theorem}\label{t1}
1) If the pair $(\alpha,\beta)$ defines compatible actions of $H$ on $G$ and $G$ on $H$, then the following inclusions hold
$$
N_{\mathrm{Aut} (G)}(\alpha(H))\geq \mathrm{Inn} (G),\quad
 N_{\mathrm{Aut} (H)}(\beta(G))\geq \mathrm{Inn} (H).
$$
Here $\mathrm{Inn} (G)$ and $\mathrm{Inn} (H)$ are the subgroups of inner automorphisms.

2) If $ \alpha : H \rightarrow \mathrm{Aut} (G)$ is an embedding and
$N_{\mathrm{Aut} (G)}(\alpha(H))\geq \mathrm{Inn} (G)$, then
defining $\beta : G \rightarrow \mathrm{Aut} (H)$ by the formula
$$
\beta(g): h \mapsto  \alpha^{-1}\left(\widehat{g}^{-1}\alpha(h)\widehat{g}\right),\quad h \in H,
$$
we get the compatible actions $(\alpha,\beta)$.
\end{theorem}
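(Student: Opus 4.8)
The plan is to read everything off the two compatibility conditions (1) and (2), which I abbreviate as
$$
\alpha\!\left(h^{\beta(g_1)}\right) = \widehat{g_1}^{-1}\alpha(h)\widehat{g_1}, \qquad \beta\!\left(g^{\alpha(h_1)}\right) = \widehat{h_1}^{-1}\beta(g)\widehat{h_1}.
$$
For part 1) fix $g_1 \in G$. Since $\beta(g_1)$ is an automorphism of $H$, the element $h^{\beta(g_1)}$ runs over all of $H$ as $h$ does, so the left side of (1) runs over all of $\alpha(H)$. Reading off the right side, this says $\widehat{g_1}^{-1}\alpha(H)\widehat{g_1} = \alpha(H)$, i.e.\ $\widehat{g_1} \in N_{\mathrm{Aut}(G)}(\alpha(H))$. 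As $g_1$ is arbitrary we obtain $\mathrm{Inn}(G) \le N_{\mathrm{Aut}(G)}(\alpha(H))$, and the second inclusion follows from (2) by the symmetric argument. (One could equally note that (1) gives $\widehat{g_1}^{-1}\alpha(H)\widehat{g_1}\subseteq\alpha(H)$ and then replace $g_1$ by $g_1^{-1}$, using $\widehat{g_1^{-1}} = \widehat{g_1}^{-1}$, for the reverse inclusion.)

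For part 2) I first check that the proposed $\beta$ is a genuine action. Because $\mathrm{Inn}(G) \le N_{\mathrm{Aut}(G)}(\alpha(H))$, conjugation by $\widehat{g}$ carries $\alpha(H)$ onto itself, so $\widehat{g}^{-1}\alpha(h)\widehat{g} \in \alpha(H)$; since $\alpha$ is injective, $\alpha^{-1}$ is defined there and $\beta(g) = \alpha^{-1}\circ(\text{conjugation by }\widehat{g})\circ\alpha$ is a well-defined automorphism of $H$. For the homomorphism property I use $\widehat{g_1 g_2} = \widehat{g_1}\widehat{g_2}$ together with the fact that conjugating by $\widehat{g_1}\widehat{g_2}$ is conjugating first by $\widehat{g_1}$ and then by $\widehat{g_2}$; transporting through $\alpha$ this yields $\beta(g_1 g_2) = \beta(g_1)\beta(g_2)$ in the right-action convention $h^{g_1 g_2} = (h^{g_1})^{g_2}$.

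It remains to verify (1) and (2). Condition (1) is immediate: applying $\alpha$ to the defining formula for $\beta(g_1)$ returns exactly $\alpha(h^{\beta(g_1)}) = \widehat{g_1}^{-1}\alpha(h)\widehat{g_1}$, so the construction is engineered to make (1) hold. Condition (2) is the substance of the statement. The key identity is that for any $\psi \in \mathrm{Aut}(G)$ one has $\widehat{g^{\psi}} = \psi^{-1}\widehat{g}\psi$, a one-line computation from $x \mapsto (g^{\psi})^{-1} x\, g^{\psi}$; applied with $\psi = \alpha(h_1)$ it rewrites $\widehat{g^{\alpha(h_1)}}$ as $\alpha(h_1)^{-1}\widehat{g}\alpha(h_1)$. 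Substituting this into the defining formula for $\beta(g^{\alpha(h_1)})$, and on the other side expanding $\widehat{h_1}^{-1}\beta(g)\widehat{h_1}$ applied to an arbitrary $h$ using that $\beta(g)$ is an automorphism of $H$ and that (1) replaces each $\alpha(\beta(g)(h'))$ by $\widehat{g}^{-1}\alpha(h')\widehat{g}$, I find that $\alpha$ of both sides collapses to the same word $\alpha(h_1)^{-1}\widehat{g}^{-1}\alpha(h_1)\alpha(h)\alpha(h_1)^{-1}\widehat{g}\alpha(h_1)$ in $\mathrm{Aut}(G)$. Injectivity of $\alpha$ then yields (2).

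I expect this last comparison to be the main obstacle: the only real work is keeping the left/right action conventions straight while pushing everything simultaneously through the homomorphism $\alpha$ and the conjugation map $\widehat{\;\cdot\;}$, and recognizing that the two sides telescope to the identical expression. Everything preceding it—well-definedness, the automorphism and homomorphism checks, and condition (1)—is formal.
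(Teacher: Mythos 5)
Your proposal is correct and follows essentially the same route as the paper: part 1 is read off directly from the rewritten compatibility relations (1) and (2), and part 2 exploits that (1) holds by construction of $\beta$, then verifies (2) by applying $\alpha$ to both sides, using the identity $\widehat{g^{\psi}}=\psi^{-1}\widehat{g}\psi$ with $\psi=\alpha(h_1)$, and concluding by injectivity of $\alpha$ — exactly the paper's computation collapsing both sides to $\alpha(h_1)^{-1}\widehat{g}^{-1}\alpha(h_1)\alpha(h)\alpha(h_1)^{-1}\widehat{g}\alpha(h_1)$. Your explicit checks of well-definedness and the homomorphism property of $\beta$ are details the paper leaves implicit, but they do not change the argument.
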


\begin{proof}
The first claim immediately  follows from the relations  (1), (2).

To prove the second claim it is enough to check (2), or that is equivalent, the equality
$$
h^{\beta\left(g^{\alpha(h_1)}\right)}= \left( \left(h^{h_1^{-1}} \right)^{\beta(g)} \right)^{h_1}.
\eqno{(3)}
$$
Using the definition $\beta$, rewrite the left side of
 (3):
$$
h^{\beta\left(g^{\alpha(h_1)}\right)}=
 \alpha^{-1}\left( \widehat{g^{\alpha(h_1)}}^{-1}  \alpha(h) \widehat{g^{\alpha(h_1)}} \right).
\eqno{(4)}
$$
 Rewrite the right  side of (3):
$$
\left( \left( h^{h_1^{-1}} \right)^{\beta(g)} \right)^{h_1}=
  h_1^{-1}(h_1 h h_1^{-1})^{\beta(g)}h_1=
    h_1^{-1} \alpha^{-1}( \widehat{g}^{-1}  \alpha(h_1 h h_1^{-1}) \widehat{g} )  h_1.
    \eqno{(5)}
$$
From (4) and (5):
$$
 \alpha^{-1}\left( \widehat{g^{\alpha(h_1)}}^{-1}  \alpha(h) \widehat{g^{\alpha(h_1)}} \right)
=
    h_1^{-1} \alpha^{-1}( \widehat{g}^{-1}  \alpha(h_1 h h_1^{-1}) \widehat{g} )  h_1.
$$
Using the homomorphism $\alpha$:
$$
 \widehat{g^{\alpha(h_1)}}^{-1}  \alpha(h) \widehat{g^{\alpha(h_1)}}
=
   \alpha \left( h_1^{-1}\alpha^{-1}( \widehat{g}^{-1}  \alpha(h_1 h h_1^{-1}) \widehat{g} )  h_1 \right)=
$$
$$
=  \alpha(h_1)^{-1}\widehat{g}^{-1}  \alpha(h_1 h h_1^{-1}) \widehat{g} \alpha(h_1)=
$$
$$
=  \alpha(h_1)^{-1}\widehat{g}^{-1}  \alpha(h_1) \alpha(h) \alpha(h_1)^{-1}) \widehat{g} \alpha(h_1)=
 \widehat{g^{\alpha(h_1)}}^{-1}  \alpha(h) \widehat{g^{\alpha(h_1)}}.
$$
In the last equality we used the formula
$$
\alpha(h_1)^{-1}\widehat{g}  \alpha(h_1) = \widehat{g^{\alpha(h_1)}}.
$$
Hence, the equality (3) holds.
\end{proof}

\begin{question}
Are the inclusions
$$
N_{\mathrm{Aut} (G)}(\alpha(H))\geq \mathrm{Inn} (G),\quad
 N_{\mathrm{Aut} (H)}(\beta(G))\geq \mathrm{Inn} (H)
$$
sufficient  for compatibility of the pare $(\alpha, \beta)$?
\end{question}

\section{Compatible actions for nilpotent groups}

At first, recall the following definition.

\begin{definition}
Let $G$ and  $H$ be groups and $G_1 \unlhd G$,   $H_1 \unlhd H$ are their normal subgroups.
We will say that $G$ is {\it comparable with $H$ with respect to the pare}   $(G_1, H_1)$,
if there are homomorphisms
$$
\varphi : G \longrightarrow H,\quad \psi : H \longrightarrow G,
$$
such that что
$$
x\equiv \psi \varphi(x) (\mathrm{mod}\, G_1), \quad
y\equiv \varphi \psi(y) (\mathrm{mod}\, H_1)
$$
for all $x\in G$, $y \in H$, i.e.
$$
 x^{-1} \cdot \psi \varphi(x) \in G_1, \quad y^{-1} \cdot \varphi \psi(y) \in H_1.
$$

\end{definition}

Note that if $G_1=1$, $H_1=1$, then  $\varphi$, $\psi$
are mutually inverse isomorphisms.

The following theorem holds.

\begin{theorem}
Let $G$, $H$ be groups and there exist homomorphisms
$$
\varphi : G \longrightarrow H,\quad \psi : H \longrightarrow G,
$$
such that
$$
x\equiv \psi\varphi(x) (\mathrm{mod}\, \zeta_2 G), \quad
y\equiv \varphi\psi(y) (\mathrm{mod}\, \zeta_2 H)
$$
for all $x\in G$, $y \in H$.
Then the action of $G$ on $H$ and the action of  $H$ on $G$ by the rules
$$
x^y=\psi(y)^{-1}x\psi(y), \quad y^x= \varphi(x)^{-1}y\varphi(x), \quad x\in G, ~y \in H,
$$
are compatible, i.e. the following equalities hold
$$
x^{(y^{x_1})}=((x^{x_1^{-1}})^y)^{x_1},\quad
  y^{(x^{y_1})}=((y^{y_1^{-1}})^x)^{y_1},\quad x, x_1\in G, ~y, y_1 \in H.
$$
\end{theorem}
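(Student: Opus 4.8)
The plan is to translate both sides of the first compatibility identity
$$x^{(y^{x_1})}=\left(\left(x^{x_1^{-1}}\right)^y\right)^{x_1}$$
into conjugations inside $G$, and then to show that the two resulting conjugating elements differ by a central element of $G$, so that they induce the same inner automorphism. The second identity will follow by interchanging the roles of $G$ and $H$ (and of $\varphi$ and $\psi$), since the hypotheses are symmetric in the two groups.

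Concretely, I would write $u=\psi\varphi(x_1)$ and $v=\psi(y)$. Since $\psi$ is a homomorphism and $y^{x_1}=\varphi(x_1)^{-1}y\varphi(x_1)$, the left-hand side becomes
$$x^{(y^{x_1})}=\psi\left(y^{x_1}\right)^{-1}x\,\psi\left(y^{x_1}\right)=(u^{-1}vu)^{-1}x\,(u^{-1}vu),$$
that is, conjugation of $x$ by $u^{-1}vu$. Unwinding the right-hand side directly, using $x^{x_1^{-1}}=x_1 x x_1^{-1}$ and $(\cdot)^y=v^{-1}(\cdot)v$, gives
$$\left(\left(x^{x_1^{-1}}\right)^y\right)^{x_1}=(x_1^{-1}vx_1)^{-1}x\,(x_1^{-1}vx_1),$$
that is, conjugation of $x$ by $x_1^{-1}vx_1$. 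Since two elements induce the same inner automorphism of $G$ exactly when their ratio is central, it suffices to prove that $(u^{-1}vu)(x_1^{-1}vx_1)^{-1}\in Z(G)$.

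To verify this I would use the hypothesis $x\equiv\psi\varphi(x)\ (\mathrm{mod}\ \zeta_2 G)$ in the form $c:=x_1^{-1}u\in\zeta_2 G$, so that $u=x_1 c$. Substituting $u=x_1 c$ and setting $w=x_1^{-1}vx_1$, a short computation collapses the product to
$$(u^{-1}vu)(x_1^{-1}vx_1)^{-1}=c^{-1}wcw^{-1}=[c,w^{-1}].$$
Because $c\in\zeta_2 G$ and $w^{-1}\in G$, the defining property of the second hypercenter yields $[c,w^{-1}]\in[\zeta_2 G,G]\subseteq\zeta_1 G=Z(G)$, which is exactly what is needed, and establishes the first identity.

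The main point, and the only place where the hypothesis genuinely enters, is this last step: the two conjugating elements are not equal, but their ratio is a commutator with one entry in $\zeta_2 G$, and such commutators are central and hence act trivially by conjugation. This is precisely why the congruences are imposed modulo $\zeta_2 G$ rather than modulo $Z(G)$: weakening to the center would not force the error term to become an inner automorphism. Everything else is bookkeeping, namely pushing the homomorphisms $\psi$ and $\varphi$ through conjugations, and invoking the $G\leftrightarrow H$ symmetry to dispatch the second identity without a separate calculation.
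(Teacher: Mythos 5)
Your proof is correct and follows essentially the same route as the paper: both compute the two sides as conjugations of $x$, write $\psi\varphi(x_1)=x_1c$ with $c\in\zeta_2 G$, and observe that the two conjugators differ by a commutator $[c,\cdot]\in\zeta_1 G=Z(G)$, so they induce the same inner automorphism (the paper likewise dispatches the second identity by symmetry). One side remark of yours is backwards, though it does not affect the argument: imposing the congruence modulo $Z(G)$ would be a \emph{stronger} hypothesis, not a weaker one, and under it the error term $[c,w^{-1}]$ would simply vanish --- the point of $\zeta_2 G$ is that it is the larger subgroup for which this argument still goes through, making the theorem more general.
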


\begin{proof}
Let us prove that  the following relation holds
$$
x^{(y^{x_1})}=((x^{x_1^{-1}})^y)^{x_1}.
$$
For this denote the left hand side of this relation by  $L$ and transform it:
$$
L=x^{(y^{x_1})}=x^{\varphi(x_1)^{-1}y\varphi(x_1)}=
 \psi(\varphi(x_1)^{-1}y^{-1}\varphi(x_1))x\psi(\varphi(x_1)^{-1}y\varphi(x_1))=
$$
$$
 =(\psi\varphi(x_1))^{-1}\psi(y)^{-1}(\psi\varphi(x_1))x (\psi\varphi(x_1))^{-1}\psi(y)(\psi\varphi(x_1))=
$$
$$
 =(c(x_1)^{-1} x_1^{-1} \psi(y)^{-1}x_1 c(x_1)) x (c(x_1)^{-1} x_1^{-1} \psi(y)x_1 c(x_1)).
$$
Here $\psi\varphi(x_1)=x_1 c(x_1)$, $c(x_1)\in \zeta_2\,G$.
Since  $c(x_1)\in \zeta_2\,G$, then the commutator  $[x_1^{-1} \psi(y) x_1, c(x_1)]$
lies in the center of  $G$. Hence
$$
L=x^{x_1^{-1} \psi(y) x_1}.
$$
Denote the right hand side of this relation by  $R$ and transform it:
$$
R=((x^{x_1^{-1}})^y)^{x_1}=((x^{x_1^{-1}})^\psi(y))^{x_1}=x^{x_1^{-1} \psi(y) x_1}.
$$
We see that  $L=R$, i.e. the first relation from the definition of compatible action holds. The checking of the second relation is the similar.
\end{proof}

From this theorem we have particular  answer on Question \ref{q1} for 2-step nilpotent groups.

\begin{corollary} \label{Cor1}
If $G$, $H$ are 2-step nilpotent groups, then any pare of homomorphisms
$$
\varphi : G \longrightarrow H,\quad \psi : H \longrightarrow G
$$
define the compatible action.
\end{corollary}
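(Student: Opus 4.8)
The plan is to observe that for a 2-step nilpotent group the congruence hypotheses of the preceding theorem become vacuous, so that the corollary reduces to a direct application of that theorem. The entire content lies in a single structural identity: if $G$ is 2-step nilpotent, then $\zeta_2 G = G$.

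To establish this identity I would argue as follows. By definition, 2-step nilpotency means $\gamma_3 G = [[G,G],G] = 1$, which is equivalent to $G' \leq \zeta_1 G = Z(G)$. Consequently the quotient $G/Z(G)$ is abelian, since its derived subgroup is the image of $G'$ under the projection $G \to G/Z(G)$, and this image is trivial because $G' \leq Z(G)$. An abelian group coincides with its own center, so $\zeta_1(G/Z(G)) = G/Z(G)$. Invoking the definition of the second hypercenter, namely $\zeta_2 G / \zeta_1 G = \zeta_1(G/\zeta_1 G)$, I obtain $\zeta_2 G / Z(G) = \zeta_1(G/Z(G)) = G/Z(G)$, whence $\zeta_2 G = G$. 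The same reasoning applied to $H$ gives $\zeta_2 H = H$.

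With these equalities in place, the two congruence conditions required by the theorem, namely $x \equiv \psi\varphi(x) \pmod{\zeta_2 G}$ and $y \equiv \varphi\psi(y) \pmod{\zeta_2 H}$, hold automatically for every pair of homomorphisms $\varphi : G \to H$ and $\psi : H \to G$. Indeed, each condition merely asserts that $x^{-1}\psi\varphi(x) \in \zeta_2 G = G$ and $y^{-1}\varphi\psi(y) \in \zeta_2 H = H$, which is trivially true. Thus the hypotheses of the preceding theorem are satisfied, and its conclusion yields that the actions given by $x^y = \psi(y)^{-1} x \psi(y)$ and $y^x = \varphi(x)^{-1} y \varphi(x)$ are compatible, proving the corollary.

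I do not anticipate any genuine obstacle, since the argument is an immediate specialization once the identity $\zeta_2 G = G$ is in hand. The only point deserving a moment of care is fixing the convention that ``2-step nilpotent'' means $\gamma_3 = 1$, equivalently $G' \leq Z(G)$; this is consistent with the paper's own usage, where $F_n/\gamma_k F_n$ is called $(k-1)$-step nilpotent, so that the 2-step case corresponds exactly to $\gamma_3 = 1$.
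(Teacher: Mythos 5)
Your proof is correct and follows exactly the paper's intended route: the paper states the corollary as an immediate consequence of the preceding theorem, and your argument simply supplies the (correct) observation that $\zeta_2 G = G$ for a 2-step nilpotent group, which makes the congruence hypotheses vacuous. Your care about the convention ``2-step nilpotent'' $\Leftrightarrow \gamma_3 = 1 \Leftrightarrow G' \leq Z(G)$ matches the paper's usage, so there is nothing to add.
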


\begin{problem} Let $G$ and $H$ be free 2-step nilpotent groups. By Corollary \ref{Cor1}, any pair of homomorphisms
 $(\varphi, \psi)$,
where $\varphi \in \mbox{Hom} (G,H)$, $\psi\in \mbox{Hom} (H,G)$ defines a tensor product $M(\varphi, \psi)=G \otimes H$. Give a classification of the groups $M(\varphi, \psi)$.
\end{problem}

Note that for arbitrary groups Corollary \ref{Cor1} does not hold. Indeed,
 let  $G=\langle x_1, x_2\rangle$,
$H=\langle y_1, y_2\rangle$ be free groups of rank 2. Define the homomorphisms
$$
\varphi : G \longrightarrow H,\quad \psi : H \longrightarrow G
$$
 by the rules
$$
\varphi(x_1)=y_1, \,\, \varphi(x_2)=y_2,\quad \psi(y_1)=\psi(y_2)=1.
$$
Then
$$
y_2^{x_1}=y_2^{\varphi(x_1)}=y_2^{y_1}\neq y_2,
$$
i.e. the conditions of compatible actions does not hold.

\section{Tensor products $G \otimes \mathbb{Z}_2$ }

Note that the group  $\mathrm{Aut}(\mathbb{Z}_2)$ is trivial and hence, any group
 $G$ acts on  $\mathbb{Z}_2$ only trivially.

This section is devoted to the answer on the following question.

\begin{question}
Let $G$ be a group and $\psi \in \mathrm{Aut}(G)$ be an automorphism of order 2. Let $\mathbb{Z}_2 = \langle \varphi \rangle$ and $\alpha : \mathbb{Z}_2 \longrightarrow \mathrm{Aut}(G)$ such that $\alpha(\varphi) = \psi$. Under what conditions the pare $(\alpha, 1)$ is compatible?
\end{question}

 If $\psi \in \mathrm{Aut}(G)$ is trivial automorphism, then by the second part of Proposition~\ref{P2.2} $G \otimes \mathbb{Z}_2 = G^{ab} \otimes_{\mathbb{Z}} \mathbb{Z}_2$ is an abelian tensor product. In the general case we have

\begin{prop}
Let

1) $G$ be a group,

2) $\mathbb{Z}_2= \langle \varphi \rangle$ be a cyclic group of order two with the generator $\varphi$,

3) $\alpha : \mathbb{Z}_2  \longrightarrow \mathrm{Aut} (G)$ be a homomorphism, $\beta=1 : G \rightarrow \mathrm{Aut} (\mathbb{Z}_2)$ be the trivial homomorphism,

Then the pare  of actions $(\alpha, \beta)$ is compatible
if and only if for any  $g\in G$ holds
$$
g^{\alpha(\varphi)}=g c(g),
$$
where $c(g)$ is a central element of $G$ such that
$c(g)^{\alpha(\varphi)}=c(g)^{-1}$. In particular, if the center of $G$ is trivial, then $G \otimes \mathbb{Z}_2 = G^{ab} \otimes_{\mathbb{Z}} \mathbb{Z}_2$.
\end{prop}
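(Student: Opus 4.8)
The plan is to exploit the fact that, since $\mathrm{Aut}(\mathbb{Z}_2)$ is trivial, the homomorphism $\beta$ is forced to be trivial, which collapses one of the two compatibility relations entirely. First I would observe that in the second identity $h^{\beta(g^{\alpha(h_1)})} = ((h^{h_1^{-1}})^{\beta(g)})^{h_1}$ every action of $G$ on $\mathbb{Z}_2$ is trivial and every conjugation inside the abelian group $\mathbb{Z}_2$ is trivial, so both sides equal $h$ and this relation holds automatically. Thus compatibility reduces to the first identity $g^{\alpha(h^{\beta(g_1)})} = ((g^{g_1^{-1}})^{\alpha(h)})^{g_1}$, where $h^{\beta(g_1)} = h$. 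For $h = 1$ this is trivial, so the only substantive case is $h = \varphi$; writing $\psi = \alpha(\varphi)$ and treating $\psi$ as an automorphism of $G$, the identity becomes
$$\psi(g) = g_1^{-1}\,\psi(g_1 g g_1^{-1})\,g_1 = g_1^{-1}\psi(g_1)\,\psi(g)\,\psi(g_1)^{-1}g_1$$
for all $g, g_1 \in G$.

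The next step is to read this as a centrality statement. Setting $c(g_1) = g_1^{-1}\psi(g_1)$, so that $\psi(g_1) = g_1 c(g_1)$ exactly as in the statement, the displayed identity says precisely that $c(g_1)$ commutes with $\psi(g)$. Since $\psi$ is an automorphism of $G$, the image $\{\psi(g) : g \in G\}$ is all of $G$, so this holds for every $g$ if and only if $c(g_1)\in Z(G)$. Running this over all $g_1$ yields the desired equivalence: the pair $(\alpha,\beta)$ is compatible if and only if $c(g)$ is central for every $g \in G$.

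It then remains to record the companion relation $c(g)^{\alpha(\varphi)} = c(g)^{-1}$. Because $\varphi$ has order two and $\alpha$ is a homomorphism, $\psi^2 = \alpha(\varphi)^2 = \alpha(1) = \mathrm{id}$; applying $\psi$ to $\psi(g) = g\,c(g)$ and using $\psi^2(g) = g$ gives $g = g\,c(g)\,\psi(c(g))$, whence $\psi(c(g)) = c(g)^{-1}$, that is $c(g)^{\alpha(\varphi)} = c(g)^{-1}$. I would emphasize that this holds unconditionally, so it is a feature of the characterization rather than an extra hypothesis needing verification. Finally, for the ``in particular'' clause, if $Z(G)=1$ and the actions are compatible then centrality forces $c(g)=1$ for all $g$, hence $g^{\alpha(\varphi)}=g$ and $\alpha(\varphi)=\mathrm{id}$; the action of $\mathbb{Z}_2$ on $G$ is therefore trivial, as is the action of $G$ on $\mathbb{Z}_2$, and the second part of Proposition~\ref{P2.2} gives $G\otimes\mathbb{Z}_2 \cong G^{ab}\otimes_{\mathbb{Z}}\mathbb{Z}_2$.

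The computations here are all routine; the only point requiring care is the bookkeeping of the conjugation convention $x^y = y^{-1}xy$ when expanding $\bigl((g^{g_1^{-1}})^{\alpha(\varphi)}\bigr)^{g_1}$, together with the single non-mechanical observation that surjectivity of $\psi$ is exactly what upgrades ``$c(g_1)$ centralizes $\psi(G)$'' to ``$c(g_1)$ is central.'' That surjectivity step is the conceptual core of the argument, and everything else is direct substitution.
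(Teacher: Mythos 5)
Your proof is correct, and its core is the same computation as the paper's: expand the first compatibility relation with $\beta$ trivial, use surjectivity of $\psi=\alpha(\varphi)$ to upgrade ``$c(g_1)$ commutes with every $\psi(g)$'' to ``$c(g_1)\in Z(G)$,'' and use $\psi^2=\mathrm{id}$ to get $c(g)^{\psi}=c(g)^{-1}$. (The paper phrases the first step through its relation~(1), i.e.\ $\mathrm{Inn}(G)$ normalizing $\alpha(\mathbb{Z}_2)$, which for a two-element image is exactly the centralizing condition you wrote down directly.)

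Where you differ is in completeness rather than method, and the difference is in your favor: the paper's proof only establishes the forward implication (compatibility $\Rightarrow$ $c(g)$ central and inverted by $\psi$), leaving the converse and the ``in particular'' clause unproved, whereas you run the reduction as a chain of equivalences, explicitly check that the second compatibility relation is vacuous, and derive $G\otimes\mathbb{Z}_2\cong G^{ab}\otimes_{\mathbb{Z}}\mathbb{Z}_2$ from Proposition~\ref{P2.2} when $Z(G)=1$. Your observation that $c(g)^{\alpha(\varphi)}=c(g)^{-1}$ holds automatically (since $\alpha(\varphi)^2=\mathrm{id}$ forces it once $\psi(g)=g\,c(g)$), so that compatibility is equivalent to centrality of $c(g)$ alone, is a genuine clarification of the statement that the paper does not make.
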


\begin{proof}
Since $\mathrm{Inn} (G)$ normalizes  $\alpha(\mathbb{Z}_2)$, then for every
 $g\in G$ holds
$$
\widehat{g}^{-1} \alpha(\varphi) \widehat{g}= \alpha(\varphi).
$$
Using this equality for arbitrary element $x \in G$ we get
$$
g^{-1} g^{\alpha(\varphi)} x^{\alpha(\varphi)}(g^{-1} g^{\alpha(\varphi)})^{-1}= x^{\alpha(\varphi)}.
$$
Since $x^{\alpha(\varphi)}$ is an arbitrary element of  $G$,
then $c(g)$ is a central element of $G$. Applying  $\alpha(\varphi)$
to the equality $g^{\alpha(\varphi)}=gc(g)$ we have
$$
g=g^{\alpha(\varphi)^2}=g^{\alpha(\varphi)}c(g)^{\alpha(\varphi)}=g c(g)c(g)^{\alpha(\varphi)},
$$
that is $c(g)^{\alpha(\varphi)}=c(g)^{-1}$.
\end{proof}

For an arbitrary abelian group $A$ we know that $A \otimes_{\mathbb{Z}} \mathbb{Z} = A$. The following proposition is some analog of this property for non-abelian tensor product.

\begin{prop}
Let $A$ be an abelian group, $\mathbb{Z}_2 = \langle \varphi \rangle$ is the cyclic group of order $2$
and $\varphi$ acts on the elements of $A$ by the following manner
$$
a^{\varphi} = a^{-1}, \quad a\in A.
$$
Then the non-abelian tensor product $A \otimes \mathbb{Z}_2$ is defined and there is an isomorphism
$$
A \otimes \mathbb{Z}_2 \cong A.
$$
\end{prop}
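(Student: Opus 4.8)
The plan is to exhibit mutually inverse homomorphisms between $A$ and $A \otimes \mathbb{Z}_2$. First I would record that the product is indeed defined: since $a^{\alpha(\varphi)} = a^{-1} = a\cdot a^{-2}$ with $c(a) = a^{-2}$ central in the abelian group $A$ and $c(a)^{\alpha(\varphi)} = a^{2} = c(a)^{-1}$, the pair $(\alpha, 1)$ satisfies the criterion of the preceding proposition, so $A$ and $\mathbb{Z}_2$ act compatibly. (Alternatively one checks the two compatibility identities directly, which is immediate because $\mathrm{Aut}(\mathbb{Z}_2)$ is trivial and $A$ is abelian.) Throughout I use the standard consequence $a \otimes 1 = 1$ of the defining relations, so that $A \otimes \mathbb{Z}_2$ is generated by the symbols $a \otimes \varphi$, $a \in A$.

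Next I would build the forward map $f : A \to A \otimes \mathbb{Z}_2$, $f(a) = a \otimes \varphi$. Applying the first defining relation $g g_1 \otimes h = (g^{g_1} \otimes h^{g_1})(g_1 \otimes h)$ with $h = \varphi$, and using that the conjugation action of $A$ on itself is trivial ($a^{a_1} = a$) and that the action of $A$ on $\mathbb{Z}_2$ is trivial ($\varphi^{a_1} = \varphi$), one obtains $a a_1 \otimes \varphi = (a \otimes \varphi)(a_1 \otimes \varphi)$. Hence $f$ is a homomorphism, and it is surjective since the $a \otimes \varphi$ generate $A \otimes \mathbb{Z}_2$.

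The heart of the argument is the reverse map. I would define a set map $\theta : A \times \mathbb{Z}_2 \to A$ by $\theta(a, \varphi) = a$ and $\theta(a, 1) = 1$, and check that it respects both families of defining relations of the tensor product, so that $a \otimes h \mapsto \theta(a, h)$ extends to a homomorphism $g : A \otimes \mathbb{Z}_2 \to A$. The verification is a short finite case analysis over $h, h_1 \in \{1, \varphi\}$; the only nonvacuous instance of the second relation, namely $\theta(a, \varphi\varphi) = \theta(a, \varphi)\,\theta(a^{\varphi}, \varphi)$, reads $1 = a\cdot a^{-1}$ and uses precisely the hypothesis $a^{\varphi} = a^{-1}$. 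I expect this to be the main (mild) obstacle, because the naive homomorphism $\kappa(a \otimes \varphi) = a^{-1} a^{\varphi} = a^{-2}$ lands in the subgroup of squares of $A$ and need not be injective; one must therefore resist using $\kappa$ and instead recognize that the correct inverse is the crossed pairing $\theta$ sending $a \otimes \varphi$ to $a$ itself.

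Finally I would compose the two maps: $g(f(a)) = g(a \otimes \varphi) = a$ shows $g \circ f = \mathrm{id}_A$, while $f(g(a \otimes \varphi)) = f(a) = a \otimes \varphi$ shows that $f \circ g$ fixes every generator and hence is the identity of $A \otimes \mathbb{Z}_2$. Therefore $f$ is an isomorphism with inverse $g$, which yields $A \otimes \mathbb{Z}_2 \cong A$.
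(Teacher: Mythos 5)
Your proof is correct, and at its core it rests on the same computation as the paper's: the generators $a\otimes 1$ are trivial, the first family of defining relations collapses (because $A$ is abelian and acts trivially on $\mathbb{Z}_2$) to $aa_1\otimes\varphi=(a\otimes\varphi)(a_1\otimes\varphi)$, and the only nontrivial instance of the second family, $1=a\otimes\varphi^2=(a\otimes\varphi)(a^{-1}\otimes\varphi)$, is consistent with this. The difference is in the packaging. The paper argues at the level of presentations: it asserts that the relations above form ``a full system of relations'' for $A\otimes\mathbb{Z}_2$ and then declares that $a\otimes\varphi\mapsto a$ is the natural isomorphism, leaving implicit exactly why a group presented by symbols $x_a$ with relations $x_{aa_1}=x_a x_{a_1}$ is $A$ itself. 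You instead build mutually inverse homomorphisms: the forward map $f(a)=a\otimes\varphi$ (a homomorphism by the collapsed relations), and the inverse $g$ obtained by checking that the pairing $\theta(a,\varphi)=a$, $\theta(a,1)=1$ respects \emph{every} defining relation and hence descends to $A\otimes\mathbb{Z}_2$ by von Dyck's theorem. That well-definedness check is precisely the content hidden in the paper's ``full system of relations'' assertion, so your version is the more self-contained one; your remark that the canonical map $\kappa(a\otimes\varphi)=a^{-2}$ onto the derivative subgroup cannot serve as the inverse is also a genuinely useful observation that the paper does not make. Likewise, your compatibility check (via the criterion of the preceding proposition, with $c(a)=a^{-2}$, or by the direct two-line verification) fills in what the paper dismisses as ``not difficult to check.'' The cost is only length; the benefit is that every step is justified.
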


\begin{proof}
It is not difficult to check that defined actions are compatible.

Since $A$ acts on $\mathbb{Z}_2$ trivially and  $A$ is abelian, then the defining relations of the tensor product:
$$
a a_1 \otimes h = (a^{a_1} \otimes h^{a_1}) (a_1 \otimes h),\quad a, a_1 \in A, \quad h \in \mathbb{Z}_2,
$$
have the form
$$
a a_1 \otimes h = (a \otimes h) (a_1 \otimes h)=(a_1 \otimes h) (a \otimes h).
\eqno{(1)}
$$
The relations
$$
a \otimes h h_1 = (a \otimes h_1) (a^{h_1} \otimes h^{h_1}),\quad a \in A, \quad h, h_1 \in \mathbb{Z}_2,
$$
give only one non-trivial relation
$$
1=a \otimes \varphi^2 = (a \otimes \varphi) (a^{-1} \otimes \varphi),\quad a \in A,
$$
which follows from  (1).

Since the set of relations (1) is a full system of relations for
$A \otimes \mathbb{Z}_2 $, then there exists the natural isomorphism of $A \otimes \mathbb{Z}_2 $ on $A$ that is defined by the formular
$$
a \otimes \varphi \mapsto a ,\quad a \in A.
$$
\end{proof}


\noindent \textbf{Acknowledgement.} The authors gratefully acknowledge the support from the  RFBR-16-01-00414 and RFBR-15-01-00745. Also, we thank S.~Ivanov,  A.~Lavrenov and V.~Thomas for the interesting discussions and  useful suggestions.

\medskip

\end{document}